\definecolor{mahogany}{cmyk}{0, 0.77, 0.87, 0}
\definecolor{salmon}{cmyk}{0, 0.53, 0.38, 0}
\definecolor{melon}{cmyk}{0, 0.46, 0.50, 0}
\definecolor{yellowgreen}{cmyk}{0.44, 0, 0.74, 0}
\definecolor{brickred}{cmyk}{0, 0.89, 0.94, 0.28}
\definecolor{OliveGreen}{cmyk}{0.64, 0, 0.95, 0.40}
\definecolor{RawSienna}{cmyk}{0, 0.72, 1.0, 0.45}
\definecolor{ZurichRed}{rgb}{1, 0, 0} 
\newcommand{\n}{\noindent}
\newcommand{\R}{\mathbb{R}}                  
\newcommand{\E}{\mathbb{E}}
\numberwithin{equation}{section}
\theoremstyle{plain}
\newtheorem{thm}{Theorem}[section]
\newtheorem{lem}[thm]{Lemma}
\theoremstyle{definition}
\newtheorem{remark}{Remark}[section]
\begin{document}

\title[First Eigenfunction of Symmetric stable processes]
{On the First Eigenfunction of the Symmetric Stable Process in a Bounded Lipschitz Domain}

\author{Rodrigo Ba\~nuelos}\thanks{R. Ba\~nuelos is supported in part  by NSF Grant
\#0603701-DMS}
\address{Department of Mathematics, Purdue University, West Lafayette, IN 47907}
\email{banuelos@math.purdue.edu}
\author{Dante DeBlassie}
\address{Department of Mathematical Sciences,
New Mexico State University, 
P. O. Box 30001,
Department 3MB, 
Las Cruces, NM 88003-8001}
\email{deblass@nmsu.edu}

\begin{abstract}
We give a proof that the first eigenfunction of the $\alpha$-symmetric stable process on a bounded Lipschitz domain in $\R^d$, $d\geq 1$, is superharmonic for $\alpha=2/m$, where $m>2$ is an integer.  This result was first proved for the ball by M. Ka{\ss}mann and L. Silvestre (personal communication) with different methods.  For $\alpha=1$,  the result was proved in \cite[Theorem 4.7]{BanKul}.

\end{abstract}
\maketitle
\maketitle

\section{Introduction}\label{sec1}

\indent 
For $\alpha\in(0,2)$ and $d\geq 1$, let $X_t$ be the $d$-dimensional $\alpha$-symmetric stable process.  This is the process with stationary independent increments whose transition density
\[
p(t,x,y)=p(t,x-y),\quad (t,x,y)\in(0,\infty)\times\mathbb{R}^d\times\mathbb{R}^d
\]
is characterized by its Fourier transform
\[
\int_{\mathbb{R}^d}e^{iy\cdot\xi}p(t,y)\,dy=\exp(-t|\xi|^\alpha),\quad t>0, \quad\xi\in\mathbb{R}^d.
\]
When $\alpha=2$, this reduces to $d$-dimensional Brownian motion run at twice its usual speed.

The potential theory for $X_t$, $0<\alpha<2$, has been the subject of intense study for quite a few years and many of the well known results for Brownian motion have been extended for these $\alpha$'s (and even to many other L\'evy processes). More recently, there have been many efforts to extend the detailed and refined spectral theoretic properties of Brownian motion to the general case of $0<\alpha<2$. Substantial progress has been made but many basic questions remain open. For some of this literature,  we refer the reader to \cite{BanMen, BanKul, BanKulSiu, Bog-others, KimSonVon, Kwa}, and the many references given in these papers. The result in this paper arises from  problems first raised in Ba\~nuelos and Kulczycki \cite{BanKul} (see the introduction to that paper) and Ba\~nuelos, Kulczycki and M\'endez-Hern\'andez \cite[Question 1.1, Conjecture 1.2]{BanKulMen},  concerning the shape of the first eigenfunction for the semigroup generated by the stable process killed upon leaving a domain $D$, or equivalently, for the first 
Dirichlet eigenfunction for the fractional Laplacian with Dirichlet boundary  conditions. As discussed in \cite{BanKul}, these problems  were originally motivated  by the classical result of Brascamp and Lieb  \cite{BraLie} which states that for  the Laplacian (the case $\alpha=2$) the eigenfunction  is $\log$--concave when the domain is  convex.   Since one way to obtain this result is to show that the finite dimensional distributions of Brownian motion are log--concave in the starting point when the sets are convex, one would expect such an approach to yield results also for stable processes and even for more general L\'evy processes.  This approach rests on Pr\'ekipa's result that multiple convolutions of log--concave functions are $\log$-concave and it fails for $\alpha\not= 2$ as the transition densities are not $\log$-concave. For more on this approach and what it gives for stable processes (``mid--concavity"), see  \cite{BanKulMen}. By exploiting the connection with a mixed Steklov problem it is proved in \cite[Theorem 4.7]{BanKul} that when $\alpha=1$--the case of the Cauchy processes--the first eigenfunction is superharmonic for any Lipschitz domain $D\subset\R^d$, $d\geq 1$.  As a corollary of this result one obtains that the eigenfunction for the interval $D=(-1,1)$ for the Cauchy process  is in fact concave (hence $\log$-concave) as in the case of the Laplacian.  In \cite{Deb},  DeBlassie made an analogous connection for \emph{rational} values of $\alpha$ and solutions of PDEs involving certain higher order operators. In this paper we make use of this connection to study the situation when $\alpha=2/m$,  where $m>2$ is an integer.

To set the stage, let $D\subseteq\mathbb{R}^d$ be an open set with finite volume. Denote by $\tau_D$ the first exit time of $X_t$ from $D$ and $\E_x$ the expectation associated with $X_0=x$. Then the operator (see \cite{BanKul} and \cite{Dav}) 
\[
P^D_tf(x)=\E_x[f(X_t); \tau_D>t],\quad x\in D,\quad t>0,\quad f\in L^2(D)
\]
generates a self-adjoint ultracontractive semigroup on $L^2(D)$ and hence there is an orthonormal basis of eigenfunctions $\{\varphi_n\}$ in $L^2(D)$ with corresponding eigenvalues $\{\lambda_n\}$ such that
\[
P^D_t\varphi_n=-\lambda_n\varphi_n,\quad\text{on $D$}
\]
and
\[
0<\lambda_1<\lambda_2\leq\lambda_3\leq\cdots,
\]
with $\lambda_n\to\infty$ as $n\to\infty$. Moreover, $\varphi_1>0$ on $D$ and each eigenfunction is bounded and continuous on $D$.  Our result in this paper extends the result in Ba\~nuelos and Kulczycki, \cite[Theorem 4.7]{BanKul} for $\alpha=1$ as well as the Ka{\ss}man-Silvestre result in the ball cited in the abstract.

\begin{thm}\label{thm1.1}
Suppose $D\subseteq\mathbb{R}^d$ is a bounded Lipschitz domain, where $d\geq 1$. Let $\alpha=2/m$, where $m>2$ is an integer. Then
\begin{equation}\label{eq1.1}
\Delta\varphi_1\leq 0\quad\text{on $D$}.
\end{equation}
\end{thm}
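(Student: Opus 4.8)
The plan is to exploit the identity $\alpha=2/m$ to convert the fractional problem into a statement about an integer-order operator, following the connection DeBlassie established in \cite{Deb}. The key observation is that the eigenvalue equation for $\varphi_1$ can be written as $(-\Delta)^{\alpha/2}\varphi_1=\lambda_1\varphi_1$ on $D$, with $\varphi_1$ extended by zero outside $D$. Since $\alpha/2=1/m$, applying the fractional Laplacian $m$ times gives $(-\Delta)\varphi_1=(-\Delta)^{m\cdot(1/m)}\varphi_1=\lambda_1^m\varphi_1$ in a formal sense, but the subtlety is that $\varphi_1$ does not solve the eigenvalue equation on all of $\R^d$, only on $D$. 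Thus I would instead set up an auxiliary function built from iterating a Dirichlet-to-Neumann–type (Steklov) extension, mirroring how the $\alpha=1$ case in \cite[Theorem 4.7]{BanKul} used the harmonic extension to the half-space.

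First I would introduce the Caffarelli–Silvestre–type extension that realizes $(-\Delta)^{1/m}$ as a boundary operator. Concretely, I would build a tower of functions $u_0=\varphi_1, u_1, \dots, u_{m-1}$ where each $u_{k+1}$ is an extension of $u_k$ into an added variable, chosen so that taking the appropriate normal derivative at the boundary recovers $(-\Delta)^{1/m}u_k$. Composing these $m$ extensions relates the top-level function to $(-\Delta)\varphi_1=\Delta\varphi_1$ up to sign. The goal is to express $-\Delta\varphi_1(x)$ for $x\in D$ as an expression that is manifestly nonnegative, using positivity of $\varphi_1$ on $D$ (guaranteed by the spectral theory quoted above) together with sign information coming from the extensions and from the fact that $\varphi_1$ vanishes outside $D$.

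The core of the argument is then to track signs through the iteration. Since $\varphi_1\geq 0$ everywhere (it is $0$ outside $D$ and positive inside), and $(-\Delta)^{1/m}\varphi_1=\lambda_1\varphi_1\geq 0$ on $D$ while being a nonpositive, nonlocal ``cooling'' term outside $D$, I expect each successive application of the fractional operator to preserve a definite sign on $D$ that alternates or stabilizes in a controlled way. The precise mechanism I anticipate is that one writes $\Delta\varphi_1=(-1)^m(-\Delta)\varphi_1$ in terms of an $m$-fold composition $\bigl((-\Delta)^{1/m}\bigr)^m\varphi_1$, and that the intermediate quantities $(-\Delta)^{k/m}\varphi_1$ for $1\le k\le m-1$ can be represented via the Green function / Riesz kernel of the stable process restricted to $D$, which is positive. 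Combining the positivity of these kernels with the boundary behavior should force $\Delta\varphi_1\le 0$ on $D$ after accounting for the parity of $m$; this is precisely where the hypothesis that $m$ is an integer greater than $2$ and $\alpha=2/m$ enters decisively.

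The hard part will be handling the boundary contributions rigorously: because $\varphi_1$ is supported in $D$ and only Lipschitz regularity is available at $\partial D$, the iterated fractional Laplacian $(-\Delta)^{k/m}\varphi_1$ need not be smooth, and justifying the pointwise identity $\Delta\varphi_1=\lambda_1^m\varphi_1+(\text{boundary terms})$ together with the sign of those boundary terms requires care. I would control this by working with the probabilistic representation of the subordinated semigroup — writing the $\alpha$-stable process as a Brownian motion time-changed by an $\alpha/2$-stable subordinator — and using the associated PDE from \cite{Deb} for the higher-order operator, where the killing on $D$ produces nonlocal terms whose signs can be read off from the positivity of $\varphi_1$ outside $D$. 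Establishing that these nonlocal remainder terms have the correct sign, uniformly up to the Lipschitz boundary, is the main obstacle, and I would resolve it by an approximation argument on smooth subdomains exhausting $D$, passing to the limit using continuity of $\varphi_1$ on $\ovl{D}$.
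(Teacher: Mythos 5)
Your skeleton --- write $-\Delta=\bigl((-\Delta)^{1/m}\bigr)^m$, use the subordination representation and DeBlassie's identity $\tfrac{\partial}{\partial t}P_t\varphi_1=-\lambda_1P_t\varphi_1+P_tr_1$ (with $r_1\ge 0$ supported in $D^c$), and reduce the theorem to sign information about nonlocal remainder terms --- is indeed the paper's strategy, so the reduction is right. But there is a genuine gap at exactly the step you wave through. Writing $L=(-\Delta)^{1/m}$ and $R_1=r_1\mathbf{1}_{D^c}$, the relation $L\varphi_1=\lambda_1\varphi_1-R_1$ iterates to
\[
-\Delta\varphi_1=L^m\varphi_1=\lambda_1^m\varphi_1-\sum_{q=0}^{m-1}\lambda_1^{m-1-q}\,L^qR_1\quad\text{on }D,
\]
so what is needed is $L^qR_1\le 0$ on $D$ for \emph{every} $q\le m-1$; this is precisely \eqref{eq3.1}, with $L^q$ realized as $\lim_{t\to0^+}(-1)^q\partial_t^qP_t$. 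For $q=0,1$ the sign is immediate, as you note. But for $q\ge 2$ your sign-propagation collapses: $LR_1$ has no definite sign on $D^c$ (where $R_1$ lives, the operator is a genuine principal-value integral), so you cannot push signs through your ``tower'' one application of $L$ at a time. Your fallback --- positivity of ``the Green function / Riesz kernel of the stable process restricted to $D$'' plus ``the parity of $m$'' --- is not the right mechanism: the Green function of the killed process never appears, parity of $m$ plays no role anywhere, and your formal identity $\Delta\varphi_1=(-1)^m(-\Delta)\varphi_1$ is false except for odd $m$. The actual mechanism, which is the heart of the paper's Section 3, is that for $q\le m-1$ the composed operator $L^q=(-\Delta)^{q/m}$ is itself a fractional Laplacian of order $q/m\in(0,1)$, so applied to the nonnegative function $R_1$ supported away from $x\in D$ it gives $-c\int_{D^c}r_1(y)\,|x-y|^{-d-q\alpha}\,dy\le 0$ with $c>0$; the positivity of $c$ is equivalent to $\sin(\pi q\alpha/2)\ge 0$, which holds exactly because $q/m<1$. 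In the paper this surfaces as the coefficient $\gamma_q$ in the subordinator expansion \eqref{eq2.4} and the concluding inequality $(-1)^q\gamma_q\le 0$ for $0\le q\le m-1$. That sine inequality is where the hypothesis $\alpha=2/m$ with $q\le m-1$ enters decisively; your proposal never identifies it.

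The rigor issue is also misplaced. The quantity $L^qR_1$ on $D$ only acquires meaning through the limit $\lim_{t\to0^+}(-1)^q\partial_t^qP_tr_1(x)$, and computing that limit is the paper's main technical labor (Lemmas \ref{lem3.1}--\ref{lem3.3} together with the combinatorial identity of Lemma \ref{lem4.1}): one splits the subordination integral at $s=Mt^{2/\alpha}$, inserts the series $f_1(s)=\sum_k\gamma_ks^{-k\alpha/2-1}$, and shows the limit equals $(q!)\,\gamma_q\int_{D^c}\bigl(\int_0^\infty s^{-q\alpha/2-1}g(s,x,y)\,ds\bigr)r_1(y)\,dy$. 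Your proposed substitute --- exhausting $D$ by smooth subdomains and passing to the limit --- does not engage this difficulty at all: the obstruction is not boundary regularity (for fixed $x\in D$ the paper's computation is entirely interior once Lemmas \ref{lem2.2} and \ref{lem2.3} are granted for Lipschitz $D$), but the $t\to0^+$ asymptotics of the derivatives of the stable subordinator density. As it stands, your outline gets the right reduction but leaves both the decisive sign inequality and the limit computation unproved.
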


\begin{remark} As mentioned above, this result was first proved for the ball by 
M. Ka{\ss}mann and L. Silvestre (personal communication).  Their method is completely different from ours. Our proof of Theorem \ref{thm1.1} rests on various extensions of the results in \cite{BanKul} and \cite{Deb}.  
\end{remark}

\section{Preliminaries}\label{sec2}

In this section we collect some results on stable subordinators. We also summarize and expand upon some of the work in \cite{BanKul} and \cite{Deb} that we use below. The extensions are easily obtained using the methods and ideas in those papers and we omit the details.

Let $P_t$ be the semigroup associated with the $\alpha$-symmetric stable process $X_t$ defined by
\[
P_tf(x)=\int_{\mathbb{R}^d}p(t,x,y)f(y)\,dy,\quad f\in L^1(\mathbb{R}^d).
\]
Define
\begin{equation}\label{eq2.1}
u_n(t,x)=P_t\varphi_n(x),\quad n\geq 1
\end{equation}
(recall $\varphi_n$ is the $n^{\text th}$ eigenfunction associated with $P_t^D$). 
It is well known that $X_t$ can be obtained by subordinating a Brownian motion. More precisely, by running a $d$-dimensional Brownian motion at twice an independent $\alpha/2$-stable subordinator, we obtain the symmetric $\alpha$-stable process $X_t$. The density $f_t(x)$ of the $\alpha/2$-stable subordinator has Laplace transform
\[
\int_0^\infty e^{-\lambda s}f_t(s)\,ds=e^{-t\lambda^{\alpha/2}}.
\]
Thus if
\[
g(s,x,y)=(4\pi s)^{-d/2}\exp(-|x-y|^2/4s),
\]
then the transition density of $X_t$ can be expressed as
\[
p(t,x,y)=\int_0^\infty g(s,x,y)f_t(s)\,ds.
\]

The density $f_t$ has the following properties:

\begin{enumerate} 
\item[ ]Scaling: 
\begin{equation}\label{eq2.2}
f_t(s)=t^{-2/\alpha}f_1(t^{-2/\alpha}s).
\end{equation}
\item[ ] 
For each nonnegative integer $q$, there are $a_j(q)$, $j=0,1,\dots, q$ such that 
\begin{equation}\label{eq2.3}
\frac{\partial^q}{\partial t^q}f_t(s)=\sum_{j=0}^q a_j(q)\,t^{-2/\alpha-q}\left(t^{-2/\alpha}s\right)^jf_1^{(j)}(t^{-2/\alpha}s). 
\end{equation}
\item[ ] If we set 
\[
\gamma_k=\frac{(-1)^{k+1}\Gamma(k\alpha/2+1)}{\pi k!}\sin\left(\frac{\pi k\alpha}{2}\right),
\]
then we have
\begin{equation}\label{eq2.4}
f_1(s)=\sum_{k=1}^\infty \gamma_ks^{-k\alpha/2-1}.
\end{equation}
\item[ ] For any nonnegative integer $n$,
\begin{equation}\label{eq2.5}
f_t^{(n)}(s)\to 0,\,\,\, \text{ as $s\to 0^+$.}
\end{equation}
\end{enumerate} 
The formula \eqref{eq2.3} follows from \eqref{eq2.2}.   The expression \eqref{eq2.4} can  be found in Zolotarev \cite[p.~90, (2.4.8)]{Zol} or Feller \cite[p.~583, Lemma 1]{Fel}. 
The limit in \eqref{eq2.5} is from Zolotarev \cite{Zol}; see Section 2.5, Theorem 2.5.3 and Remark 1.

\bigskip
\begin{lem}\label{lem2.1} If $\alpha=2/m$ for some integer $m>2$, then
\[
\left(\frac{\partial}{\partial s}-(-1)^m\frac{\partial^m}{\partial t^m}\right)f_t(s)=0.
\]
\end{lem}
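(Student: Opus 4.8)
The plan is to verify the identity on the Laplace transform side and then invoke injectivity of the Laplace transform. Writing $\hat f_t(\lambda)=\int_0^\infty e^{-\lambda s}f_t(s)\,ds$ and using $\alpha/2=1/m$, the defining Laplace transform of the subordinator density gives $\hat f_t(\lambda)=e^{-t\lambda^{1/m}}$. Fix $t>0$ and set $h(s)=\frac{\partial}{\partial s}f_t(s)-(-1)^m\frac{\partial^m}{\partial t^m}f_t(s)$, which is continuous in $s$ on $(0,\infty)$; it suffices to show that $\int_0^\infty e^{-\lambda s}h(s)\,ds=0$ for all $\lambda>0$ and to conclude $h\equiv 0$ by injectivity.

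For the $t$-derivative term I would differentiate under the integral sign, so that the Laplace transform of $\frac{\partial^m}{\partial t^m}f_t$ equals $\frac{\partial^m}{\partial t^m}e^{-t\lambda^{1/m}}=(-\lambda^{1/m})^m e^{-t\lambda^{1/m}}=(-1)^m\lambda\, e^{-t\lambda^{1/m}}$. For the $s$-derivative term I would integrate by parts:
\[
\int_0^\infty e^{-\lambda s}\frac{\partial}{\partial s}f_t(s)\,ds=\Big[e^{-\lambda s}f_t(s)\Big]_0^\infty+\lambda\int_0^\infty e^{-\lambda s}f_t(s)\,ds,
\]
where the boundary contribution at $s\to 0^+$ vanishes by \eqref{eq2.5} (with $n=0$) and the contribution at $s\to\infty$ vanishes because $f_t$ is an integrable density, leaving $\lambda\hat f_t(\lambda)=\lambda\, e^{-t\lambda^{1/m}}$. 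Combining the two computations, the Laplace transform of $h$ is $\lambda\, e^{-t\lambda^{1/m}}-(-1)^m(-1)^m\lambda\, e^{-t\lambda^{1/m}}=0$, since $(-1)^{2m}=1$, and injectivity of the Laplace transform then forces $h\equiv 0$, which is the assertion.

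The main obstacle is rigor rather than ingenuity: one must justify the interchange of $\frac{\partial^m}{\partial t^m}$ with the integral and confirm the vanishing of the $s\to 0^+$ boundary term, and here the explicit formulas \eqref{eq2.3} and \eqref{eq2.5} are exactly what is needed, since \eqref{eq2.3} expresses $\frac{\partial^m}{\partial t^m}f_t$ in terms of the $f_1^{(j)}$ and thereby supplies a dominating function, while \eqref{eq2.5} gives $f_t(0^+)=0$. As an independent check one can bypass the transform entirely: scaling \eqref{eq2.2} together with the series \eqref{eq2.4} yields $f_t(s)=\sum_{k\ge 1}\gamma_k\, t^k s^{-k/m-1}$, and differentiating term by term shows that $\frac{\partial}{\partial s}f_t$ and $(-1)^m\frac{\partial^m}{\partial t^m}f_t$ agree power by power, the term $t^k s^{-k/m-2}$ in the former matching the $j=k$ term of the latter. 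This verification rests on two facts: the coefficient $\gamma_m$ vanishes because $\sin(\pi m\alpha/2)=\sin\pi=0$, which removes the unmatched $t^0$ term produced by the $m$-fold $t$-differentiation, and the identities $\Gamma(k/m+2)=(k/m+1)\Gamma(k/m+1)$ and $\sin\!\big(\pi(k/m+1)\big)=-\sin(\pi k/m)$ give $\gamma_{k+m}\frac{(k+m)!}{k!}=(-1)^m\gamma_k\big(-k/m-1\big)$, which is precisely the coefficient matching required.
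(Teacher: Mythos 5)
Your proposal is correct and follows essentially the same route as the paper: both pass to the Laplace transform side, justify differentiating $\int_0^\infty e^{-\lambda s}f_t(s)\,ds$ under the integral sign in $t$, integrate by parts in $s$ (with the boundary terms killed via \eqref{eq2.2}--\eqref{eq2.5}), and match $\lambda e^{-t\lambda^{1/m}}$ with $(-1)^m\frac{\partial^m}{\partial t^m}e^{-t\lambda^{1/m}}$, injectivity of the Laplace transform being implicit in the paper and explicit in your write-up. Your closing term-by-term series verification (resting on $\gamma_m=0$ and the recursion relating $\gamma_{k+m}$ to $\gamma_k$) is a nice extra consistency check not present in the paper, but the core argument is identical.
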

\begin{proof}
It is not hard to show that for $\lambda>0$ and $q>0$,
\[
\frac{\partial^q}{\partial t^q}\int_0^\infty e^{-\lambda s}f_t(s)\,ds=\int_0^\infty e^{-\lambda s}\frac{\partial^q}{\partial t^q}f_t(s)\,ds
\]
(see the proof of Lemma 3.1 in \cite{Deb}). Then integration by parts (using \eqref{eq2.2}, \eqref{eq2.4} and \eqref{eq2.5} to see that the boundary terms are 0) gives
\begin{align*}
\int_0^\infty e^{-\lambda s}\frac{\partial}{\partial s}f_t(s)\,ds&=\lambda\int_0^\infty e^{-\lambda s}f_t(s)\,ds\\
&=\lambda\exp(-t\lambda^{\alpha/2})=\lambda\exp(-t\lambda^{1/m})\\
&=(-1)^m\frac{\partial^m}{\partial t^m}\exp(-t\lambda^{1/m})=(-1)^m\frac{\partial^m}{\partial t^m}\exp(-t\lambda^{\alpha/2})\\
&=(-1)^m\frac{\partial^m}{\partial t^m}\int_0^\infty e^{-\lambda s}f_t(s)\,ds\\
&=\int_0^\infty e^{-\lambda s}(-1)^m\frac{\partial^m}{\partial t^m}f_t(s)\,ds.
\end{align*}
\end{proof}

The next result is taken from DeBlassie \cite[Propositions 4.7 and 4.8]{Deb}.
\begin{lem}\label{lem2.2}
Let $D\subseteq\mathbb{R}^d$ be a bounded Lipschitz domain and suppose $\alpha=k/m\in(0,1)$ is rational. Then for $x\in\mathbb{R}^d$ and $t>0$,
\[
\frac{\partial u_n}{\partial t}(t,x)=-\lambda_nu_n(t,x)+P_tr_n(x),
\]
where the integrable function $r_n$ is given by
\[
r_n(x)=
\left\{
\begin{array}{ll}
\displaystyle\int_D\dfrac{c_{d,\alpha}\,\varphi_n(y)}{|x-y|^{d+\alpha}}\,dy,&\quad x\in\text{int($D^c$)}\\
&\\
0,&\quad x\in\overline{D},
\end{array}
\right.
\]
and
\[
c_{d,\alpha}=2^\alpha\pi^{-1-d/2}\Gamma\left(\frac{d+\alpha}{2}\right)\Gamma\left(1+\frac{\alpha}{ 2}\right)\sin\frac{\pi\alpha}{2}.\qquad\qquad\qquad\square
\]
\end{lem}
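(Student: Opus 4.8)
The plan is to differentiate $u_n(t,x)=P_t\varphi_n(x)$ in $t$ and exploit that the infinitesimal generator of the free semigroup $P_t$ is the fractional Laplacian $\cl{A}=-(-\Delta)^{\alpha/2}$, which on suitable functions admits the pointwise (hypersingular) representation
\[
\cl{A}f(x)=c_{d,\alpha}\,\mathrm{P.V.}\int_{\R^d}\frac{f(y)-f(x)}{|x-y|^{d+\alpha}}\,dy .
\]
Granting that $\varphi_n$ (extended by $0$ off $D$) lies in the $L^2$-domain of $\cl{A}$ and that differentiation may be carried under $P_t$, the semigroup property gives
\[
\frac{\partial u_n}{\partial t}(t,x)=\cl{A}u_n(t,x)=P_t(\cl{A}\varphi_n)(x),
\]
so the entire problem reduces to computing $\cl{A}\varphi_n$ pointwise and recognizing it as $-\lambda_n\varphi_n+r_n$.

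First I would compute $\cl{A}\varphi_n(x)$ by splitting on the location of $x$. For $x\in D$, the eigenvalue relation $P^D_t\varphi_n=-\lambda_n\varphi_n$ is precisely the assertion that $\cl{A}\varphi_n(x)=-\lambda_n\varphi_n(x)$ there, since the generator of the killed semigroup coincides with $\cl{A}$ on functions supported in $D$. For $x\in\mathrm{int}(D^c)$ one has $\varphi_n(x)=0$ and $\varphi_n\equiv 0$ on a neighborhood of $x$, so no principal value is needed and the representation collapses to
\[
\cl{A}\varphi_n(x)=c_{d,\alpha}\int_{\R^d}\frac{\varphi_n(y)}{|x-y|^{d+\alpha}}\,dy=c_{d,\alpha}\int_D\frac{\varphi_n(y)}{|x-y|^{d+\alpha}}\,dy=r_n(x).
\]
Combining the two cases yields $\cl{A}\varphi_n=-\lambda_n\varphi_n+r_n$ on $\R^d$ (the set $\partial D$ has Lebesgue measure zero, so the convention $r_n=0$ there is immaterial for $P_t$, and it matches $\varphi_n=0$ on $\overline D$). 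Applying $P_t$ and using linearity then produces
\[
\frac{\partial u_n}{\partial t}(t,x)=P_t(-\lambda_n\varphi_n+r_n)(x)=-\lambda_n u_n(t,x)+P_tr_n(x),
\]
which is the claimed identity.

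The main obstacle is analytic rather than algebraic: one must verify that $r_n$ is genuinely integrable and that the manipulations above are legitimate on a mere Lipschitz domain. The delicate region is a one-sided neighborhood of $\partial D$, where for $x\in\mathrm{int}(D^c)$ approaching the boundary the kernel $|x-y|^{-d-\alpha}$ becomes singular as $y\to\partial D$; finiteness and $L^1$-control of $r_n$ therefore hinge on $\varphi_n(y)$ vanishing fast enough. The required input is the boundary decay estimate $\varphi_n(y)\lesssim\delta_D(y)^{\alpha/2}$, where $\delta_D$ is the distance to $\partial D$. A local half-space computation then shows $r_n(x)\asymp\delta_{D^c}(x)^{-\alpha/2}$ as $x\to\partial D$ from outside, together with $r_n(x)\asymp|x|^{-d-\alpha}$ at infinity; since $\alpha<1$ both the singularity $\delta^{-\alpha/2}$ and the tail $|x|^{-d-\alpha}$ are integrable (indeed square-integrable), which simultaneously establishes $r_n\in L^1(\R^d)$ and places $\varphi_n$ in the $L^2$-domain of $\cl{A}$.

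Finally I would record the two justifications the formal computation presumed. Interior regularity of $\varphi_n$ legitimizes the pointwise eigenvalue equation $\cl{A}\varphi_n=-\lambda_n\varphi_n$ on $D$ and the use of the hypersingular representation there. With $\varphi_n\in\mathrm{Dom}(\cl{A})$ in hand, the interchange $\partial_tP_t\varphi_n=P_t\cl{A}\varphi_n=\cl{A}P_t\varphi_n$ is standard semigroup theory, the dominated convergence being controlled by the smoothness and decay of $p(t,x,y)$ in $t$ and by the integrability of $r_n$ just established. These are exactly the ingredients furnished by the methods of \cite{Deb}, from which Propositions 4.7 and 4.8 are drawn.
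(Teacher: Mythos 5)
First, a point of reference: the paper itself contains no proof of this lemma. It is imported verbatim from \cite[Propositions 4.7 and 4.8]{Deb}, so your attempt can only be judged on its own merits and against what such a proof must supply. Your algebraic skeleton is the natural (and surely the intended) one: identify $\mathcal{A}\varphi_n$ pointwise as $-\lambda_n\varphi_n$ on $D$ and as $r_n$ on $\mathrm{int}(D^c)$, then propagate with $P_t$. The genuine gap is in the analytic justification. The ``required input'' you invoke, $\varphi_n(y)\lesssim\delta_D(y)^{\alpha/2}$, is a $C^{1,1}$ estimate and is \emph{false} for general bounded Lipschitz domains: at a reentrant Lipschitz corner (e.g.\ the planar domain lying above the graph of $-|x|$, interior angle $3\pi/2$) the decay exponent $\beta$ of $\varphi_1$ is strictly smaller than $\alpha/2$, and for Lipschitz domains whose complement contains only a thin cone the exponent can be made arbitrarily close to $0$. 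This does not merely weaken your bound; it destroys the functional framework you chose. Your interchange $\partial_tP_t\varphi_n=P_t\mathcal{A}\varphi_n$ was justified by placing $\varphi_n$ in the $L^2$-domain of $\mathcal{A}$, which is equivalent to $r_n\in L^2(\mathbb{R}^d)$. In general one only has $r_n(x)\le C\|\varphi_n\|_\infty\,\delta(x)^{-\alpha}$ near $\partial D$, which fails to be square-integrable when $\alpha\ge 1/2$; and when the true decay exponent satisfies $\beta\le\alpha-\tfrac12$ (which can happen for $\alpha\in(1/2,1)$ at strongly reentrant boundary points, since $r_n\asymp\delta^{\beta-\alpha}$ there) square-integrability of $r_n$ genuinely fails. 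So as written the proof collapses exactly for $\alpha\in[1/2,1)$ on the class of domains the lemma is about.

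The repair is to drop both the boundary estimate and the $L^2$ framework. Integrability of $r_n$ --- which is all the statement asserts and all the argument needs --- follows from boundedness of $\varphi_n$ alone: near $\partial D$ one has $r_n\lesssim\delta^{-\alpha}$, which is locally integrable precisely because $\alpha<1$ (this is where the hypothesis $\alpha\in(0,1)$ enters, and why the lemma is not stated for $\alpha\ge1$), while $r_n(x)\lesssim|x|^{-d-\alpha}$ at infinity is integrable for every $\alpha>0$, not only $\alpha<1$ as you suggest. One should then either run the semigroup argument on $L^1(\mathbb{R}^d)$, where $(P_t)$ is also strongly continuous and the distributional identity $\mathcal{A}\varphi_n=-\lambda_n\varphi_n+r_n\in L^1$ identifies $\varphi_n$ as an element of the $L^1$-generator domain, or, as in \cite{Deb}, differentiate $u_n(t,x)=\int p(t,x,y)\varphi_n(y)\,dy$ under the integral sign using kernel estimates; in either case a final routine continuity argument is needed to upgrade an a.e./$L^1$ identity to the asserted pointwise identity for every $x\in\mathbb{R}^d$ and $t>0$. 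One smaller point: your claim that the generator of the killed semigroup coincides with $\mathcal{A}$ at interior points on functions vanishing off $D$ is correct, but it is itself an estimate (starting from $x\in D$, the probability of exiting $D$ and returning by time $s$ is $o(s)$), not a restatement of the eigenvalue relation, and deserves proof or a citation.
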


Using the methods from \cite{Deb}, the argument used to prove Theorem 4.7 in Ba\~nuelos and Kulczycki \cite{BanKul} can be modified to yield the following Lemma.

\begin{lem}\label{lem2.3}
Suppose $D\subseteq\mathbb{R}^d$ is a bounded Lipschitz domain and $\alpha=k/m\in(0,1]$ is rational.  Then for $x\in D$,
\[
\Delta_xu_1(t,x)\to\Delta_x\varphi_1(x),\quad\text{as $t\to 0^+.$}\qquad\qquad\qquad\square
\]
\end{lem}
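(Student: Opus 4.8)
The plan is to work directly from the subordination formula of Section~\ref{sec2}, bypassing the identity in Lemma~\ref{lem2.1}. Let $Q_s$ denote the Gaussian semigroup with kernel $g(s,x,y)$, and set $h(s,x)=Q_s\varphi_1(x)$, so that subordination gives $u_1(t,x)=\int_0^\infty h(s,x)\,f_t(s)\,ds$. Because $h$ solves the heat equation, $\partial_s h=\Delta_x h$ and $h(0,\cdot)=\varphi_1$, and once differentiation under the integral sign is justified we obtain, for each fixed $t>0$,
\[
\Delta_x u_1(t,x)=\int_0^\infty \Delta_x h(s,x)\,f_t(s)\,ds=\int_0^\infty \partial_s h(s,x)\,f_t(s)\,ds .
\]
Since $f_t$ is a probability density on $(0,\infty)$ (its Laplace transform equals $1$ at $\lambda=0$) that concentrates at $s=0$ as $t\to0^+$ by the scaling \eqref{eq2.2}, the lemma reduces to showing that $s\mapsto\partial_s h(s,x)$ is bounded on $(0,\infty)$ and continuous at $s=0$ with limit $\Delta\varphi_1(x)$.

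First I would invoke interior regularity: as $\varphi_1$ solves $(-\Delta)^{\alpha/2}\varphi_1=\lambda_1\varphi_1$ in $D$, it is $C^\infty$ on compact subsets of $D$, so $\Delta\varphi_1(x)$ is a genuine classical Laplacian for $x\in D$. Fixing such an $x$ and a ball $B(x,\rho)\subset D$, I would split $\varphi_1=\chi\varphi_1+(1-\chi)\varphi_1$ with a cutoff $\chi\equiv1$ near $x$ supported in $B(x,\rho)$. The piece $\chi\varphi_1\in C^2_c(\mathbb{R}^d)$ gives $\Delta Q_s(\chi\varphi_1)(x)=Q_s\Delta(\chi\varphi_1)(x)\to\Delta\varphi_1(x)$, while $(1-\chi)\varphi_1$ vanishes near $x$, so its contribution and that of its $x$-Laplacian are $O(e^{-c/s})$ by Gaussian decay; hence $\partial_s h(0^+,x)=\Delta\varphi_1(x)$. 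Continuity of $\partial_s h(\cdot,x)$ on $(0,\infty)$ is immediate from smoothness of $h$ there, and the elementary bound $\int_{\mathbb{R}^d}|\partial_s g(s,x,y)|\,dy\le d/s$ yields $|\partial_s h(s,x)|\le (d/s)\|\varphi_1\|_\infty$, so $\partial_s h(\cdot,x)$ is bounded on $(0,\infty)$. These same estimates supply the domination needed to differentiate under the integral at fixed $t>0$.

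The limit is then routine. Given $\varepsilon>0$, choose $\delta$ with $|\partial_s h(s,x)-\Delta\varphi_1(x)|<\varepsilon$ on $(0,\delta)$; the integral over $(0,\delta)$ is within $\varepsilon$ of $\Delta\varphi_1(x)\int_0^\delta f_t$, and the integral over $[\delta,\infty)$ is at most $\|\partial_s h(\cdot,x)\|_\infty\int_{\delta t^{-2/\alpha}}^\infty f_1(u)\,du$ after the change of variables from \eqref{eq2.2}, which tends to $0$ as $t\to0^+$ by the integrability of $f_1$ (cf. \eqref{eq2.4}). Letting $t\to0^+$ and then $\varepsilon\to0$ gives $\Delta_x u_1(t,x)\to\Delta\varphi_1(x)$. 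I expect the main obstacle to be technical rather than conceptual: securing the interchange of $\Delta_x$ with $\int_0^\infty(\cdot)\,f_t(s)\,ds$ together with the global-in-$s$ bound on $\partial_s h(\cdot,x)$. This is precisely where the interior regularity of $\varphi_1$ near $x$, the Gaussian estimate on $\Delta_x g$, and the scaling and tail asymptotics \eqref{eq2.2}--\eqref{eq2.4} of $f_t$ all enter, in the spirit of the methods of \cite{Deb} and \cite{BanKul}.
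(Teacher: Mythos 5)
Your proposal is essentially correct, but it takes a genuinely different route from the paper's. The paper gives no self-contained proof of Lemma \ref{lem2.3}: it invokes a modification of the proof of Theorem 4.7 of \cite{BanKul}, carried out with the machinery of \cite{Deb}. That argument runs through the extension PDE on the half-space $(0,\infty)\times\mathbb{R}^d$ --- the Steklov/harmonic-extension picture when $\alpha=1$, and its higher-order analogue (of the type $\Delta_x u_1=(-1)^{m+1}\partial_t^m u_1$ derived in Section \ref{sec3}) for rational $\alpha$ --- and obtains $\Delta_x u_1(t,x)\to\Delta_x\varphi_1(x)$ from regularity of $u_1$ up to the boundary piece $\{0\}\times D$; in particular, the very fact that $\Delta\varphi_1$ has a classical meaning on $D$ is produced by that PDE argument. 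You instead work directly from subordination: $\Delta_x u_1(t,x)=\int_0^\infty \partial_s h(s,x)\,f_t(s)\,ds$ with $h(s,x)=Q_s\varphi_1(x)$ (justified by Lemma \ref{lem2.4} and Fubini), followed by an approximate-identity argument since $f_t$ is a probability density concentrating at $s=0$ by the scaling \eqref{eq2.2}. This is more elementary and, notably, nowhere uses rationality of $\alpha$: your argument proves the lemma for every $\alpha\in(0,2)$, whereas the paper's cited route is tied to the rational-$\alpha$ PDE framework. The price is that you must import, as an external input, the interior regularity $\varphi_1\in C^2$ locally in $D$ (e.g.\ via the standard Riesz-potential bootstrap for $(-\Delta)^{\alpha/2}\varphi_1=\lambda_1\varphi_1$, or interior Schauder-type estimates for the fractional Laplacian). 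In the paper's approach this regularity is exactly what the cited extension argument delivers, so your proof shifts the analytic core of the lemma onto that citation; the result is indeed known, but it should be cited precisely, since that is where all the difficulty lives.

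Two small repairs. First, the bound $|\partial_s h(s,x)|\le (d/s)\|\varphi_1\|_\infty$ does not by itself give boundedness of $\partial_s h(\cdot,x)$ on $(0,\infty)$, since it blows up as $s\to 0^+$; boundedness near $s=0$ comes from the limit $\partial_s h(s,x)\to\Delta\varphi_1(x)$ that your cutoff argument establishes, and in the final estimate you only ever need $\sup_{s\ge\delta}|\partial_s h(s,x)|\le (d/\delta)\|\varphi_1\|_\infty$, so the argument is unaffected. Second, for the differentiation-under-the-integral/Fubini step you should record explicitly that $\int_0^\infty s^{-1}f_t(s)\,ds<\infty$: this follows because \eqref{eq2.5} forces $f_t(s)=o(s^n)$ for every $n$ as $s\to 0^+$, while \eqref{eq2.2} and \eqref{eq2.4} give $f_t(s)=O(s^{-\alpha/2-1})$ as $s\to\infty$.
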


The final result we will need below is an easy extension of Lemma 2.1 in DeBlassie \cite{Deb}.

\begin{lem}\label{lem2.4}
Suppose $\varphi$ is integrable on $\mathbb{R}^d$. Then for any integer $q\geq 0$,
\[
\frac{\partial^q}{\partial t^q}P_t\varphi(x)=\int_{\mathbb{R}^d}\int_0^\infty\varphi(y)g(s,x,y)\frac{\partial^q}{\partial t^q}f_t(s)\,ds\,dy.
\]
In addition, if $\varphi$ is bounded with compact support, then for any multi-index $\gamma=(\gamma_1,\dots,\gamma_d)$,
\[
D^\gamma_xP_t\varphi(x)=\int_{\mathbb{R}^d}\int_0^\infty\varphi(y)D^\gamma_xg(s,x,y)f_t(s)\,ds\,dy,\qquad D^\gamma_x=\frac{\partial^{\gamma_1}}{\partial x^{\gamma_1}}\cdots\frac{\partial^{\gamma_d}}{\partial x^{\gamma_d}}.\qquad\square
\]
\end{lem}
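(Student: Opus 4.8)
The plan is to establish both identities by justifying differentiation under the integral sign in the subordination representation
\[
P_t\varphi(x)=\int_{\mathbb{R}^d}\int_0^\infty\varphi(y)\,g(s,x,y)\,f_t(s)\,ds\,dy,
\]
where the Fubini interchange in writing $p(t,x,y)=\int_0^\infty g(s,x,y)f_t(s)\,ds$ is legitimate by absolute integrability. I would then proceed one derivative at a time, invoking dominated convergence at each stage. For the $t$-derivatives the differentiations fall on $f_t(s)$, while for the spatial derivatives they fall on the Gaussian kernel $g(s,x,y)$; in both cases the crux is to produce, for $t$ (resp.\ $x$) ranging over a compact neighborhood of a fixed point, a majorant that is integrable in $(s,y)$ over $(0,\infty)\times\mathbb{R}^d$ and independent of the parameter being differentiated.

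For the first identity I would argue by induction on $q$. Assuming the formula holds for some $q\ge0$, I form the difference quotient of $\partial_t^q f_t(s)$ in $t$ and, by the mean value theorem, bound it by $\sup_\xi|\partial_t^{q+1}f_\xi(s)|$ with $\xi$ in a small interval about $t$. The explicit expression \eqref{eq2.3} reduces everything to controlling $(t^{-2/\alpha}s)^j f_1^{(j)}(t^{-2/\alpha}s)$. Near $s=0$ the rapid vanishing of $f_t$ and all its derivatives, \eqref{eq2.5} (which is in fact of exponential order for the stable subordinator), more than absorbs the singularity $(4\pi s)^{-d/2}$ of the Gaussian; near $s=\infty$ the expansion \eqref{eq2.4} gives $f_1(u)\sim\gamma_1u^{-\alpha/2-1}$, so that $u^j f_1^{(j)}(u)=O(u^{-\alpha/2-1})$ and hence $\partial_t^{q+1}f_t(s)=O(s^{-\alpha/2-1})$ uniformly for $t$ near $t_0$. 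Since $\int_{\mathbb{R}^d}g(s,x,y)|\varphi(y)|\,dy\le(4\pi s)^{-d/2}\|\varphi\|_{L^1}$, the resulting majorant $(4\pi s)^{-d/2}\sup_\xi|\partial_t^{q+1}f_\xi(s)|\,\|\varphi\|_{L^1}$ is integrable in $s$ on $(0,\infty)$, which is exactly what dominated convergence requires to pass the $(q+1)$-st $t$-derivative inside.

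For the second identity, the key observation is that each $\partial_{x_i}$ applied to $g(s,x,y)$ produces the factor $-(x_i-y_i)/(2s)$, so $D_x^\gamma g(s,x,y)$ equals a polynomial in $(x-y)/s$ of degree at most $|\gamma|$ times $g(s,x,y)$. Using the elementary bound $|x-y|^k e^{-|x-y|^2/4s}\le C_k\,s^{k/2}$ one obtains the uniform estimate $|D_x^\gamma g(s,x,y)|\le C_\gamma\,s^{-d/2-|\gamma|/2}$. Because $\varphi$ is bounded with compact support, the function $|\varphi(y)|\,C_\gamma\,s^{-d/2-|\gamma|/2}f_t(s)$ is integrable over $\mathbb{R}^d\times(0,\infty)$---again by \eqref{eq2.5} near $s=0$ and \eqref{eq2.4} near $s=\infty$---and furnishes the dominating function needed to differentiate under the integral, inductively in $|\gamma|$.

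The main obstacle is the construction of the $s$-integrable majorant for the $t$-derivatives: one must simultaneously tame the $s^{-d/2}$ blow-up of the Gaussian at the origin and the merely polynomial, heavy-tailed decay of the subordinator density at infinity, and verify that both behaviors persist after differentiating in $t$. This is precisely where \eqref{eq2.3}, \eqref{eq2.4} and \eqref{eq2.5} are used in concert. Once the majorant is secured, the remaining arguments are routine applications of dominated convergence and induction, as in the proof of Lemma 2.1 in \cite{Deb}.
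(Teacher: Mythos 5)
Your proof is correct, and it is essentially the paper's approach: the paper gives no argument for Lemma \ref{lem2.4} at all, simply calling it an easy extension of Lemma 2.1 of \cite{Deb}, and the standard proof of that cited lemma is exactly your differentiation-under-the-integral scheme (induction plus mean value theorem plus dominated convergence, with majorants built from \eqref{eq2.3}--\eqref{eq2.5} and the Gaussian derivative bounds $|D^\gamma_x g(s,x,y)|\leq C_\gamma s^{-d/2-|\gamma|/2}$). Your parenthetical strengthening of \eqref{eq2.5} to rapid (exponential-order) vanishing of $f_1$ and its derivatives at $0^+$ --- valid by Zolotarev's Theorem 2.5.3 --- is indeed necessary here, not a luxury: unlike in Lemma \ref{lem3.1}, where $x\in D$ and $y\in D^c$ supply a factor $e^{-c/s}$, in Lemma \ref{lem2.4} the variable $y$ ranges over all of $\mathbb{R}^d$, so only the decay of $f_t$ near $s=0$ can absorb the $s^{-d/2}$ singularity of the heat kernel.
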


\bigskip

\section{Proof of Theorem \ref{thm1.1}}\label{sec3}

We assume the hypotheses of Theorem \ref{thm1.1} throughout this section. 

By Lemmas \ref{lem2.1} and \ref{lem2.4}, integration by parts (where we also use \eqref{eq2.2}--\eqref{eq2.5} to see that the boundary terms are 0) yields

\begin{align*}
\Delta_x u_1(t,x)&=\Delta_x P_t\varphi_1(x)=\int_{\mathbb{R}^d}\int_0^\infty\varphi_1(y)\Delta_xg(s,x,y)f_t(s)\,ds\,dy\\
&=\int_{\mathbb{R}^d}\int_0^\infty\varphi_1(y)\left[\frac{\partial}{\partial s}g(s,x,y)\right]f_t(s)\,ds\,dy\\
&=-\int_{\mathbb{R}^d}\int_0^\infty\varphi_1(y)g(s,x,y)\frac{\partial}{\partial s}f_t(s)\,ds\,dy\\
&=(-1)^{m+1}\int_{\mathbb{R}^d}\int_0^\infty\varphi_1(y)g(s,x,y)\frac{\partial^m}{\partial t^m}f_t(s)\,ds\,dy\\
&=(-1)^{m+1}\frac{\partial^m}{\partial t^m}P_t\varphi_1(x)\\
&=(-1)^{m+1}\frac{\partial^m}{\partial t^m}u_1(t,x).
\end{align*}

By Lemma \ref{lem2.4} $P_tr_1$ is infinitely differentiable in $t$, so by Lemma \ref{lem2.2} we have
\[
\frac{\partial^m u_1}{\partial t^m}(t,x)=(-1)^m\left(\lambda_1^{m}u_1(t,x)+\sum_{q=0}^{m-1}(-1)^{q+1}\,\lambda_1^{m-1-q}\,\frac{\partial^q}{\partial t^q}P_tr_1(x)\right). 
\]
Thus, 
\[
\Delta_x u_1(t,x)=-\lambda_1^{m}u_1(t,x)+\sum_{q=0}^{m-1}(-1)^{q}\,\lambda_1^{m-1-q}\,\frac{\partial^q}{\partial t^q}P_tr_1(x).
\]
By Lemma \ref{lem2.3}, upon letting $t\to 0^+$, we get
\[
\Delta_x\varphi_1(x)=-\lambda_1^{m}\varphi_1(x)+\lim_{t\to 0^+}\sum_{q=0}^{m-1}(-1)^{q}\,\lambda_1^{m-1-q}\,\frac{\partial^q}{\partial t^q}P_tr_1(x).
\]
Thus the conclusion of Theorem \ref{thm1.1} will follow once we show 
\begin{equation}\label{eq3.1}
\lim_{t\to 0^+}(-1)^{q}\,\frac{\partial^q}{\partial t^q}P_tr_1(x)\leq 0,\quad q=0,\dots,m-1.
\end{equation}

In order to prove \eqref{eq3.1} we need the following technical lemmas, whose proofs we defer to the next section.

\begin{lem}\label{lem3.1}
Let $x\in D$ and set $c=d(x,D^c)^2/4$ (which is positive). Then given a positive integer $q$ and $M>1$, for some $c_1(q,M)>0$ independent of $t$ we have 
\[
\sup_{y\in D^c}\left|\int_0^{Mt^{2/\alpha}}g(s,x,y)\,\frac{\partial^q}{\partial t^q}f_t(s)\,ds\right|\leq c_1(q,M)\,t^{-2/\alpha-q}\int_0^{Mt^{2/\alpha}} s^{-d/2}
\,e^{-c/s}\,ds\to 0,
\]
as $t\to 0^+$.
\end{lem}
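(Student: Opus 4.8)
The plan is to estimate the inner integral by combining the Gaussian decay of $g(s,x,y)$ — which is uniform over $y\in D^c$ precisely because $x$ sits at a positive distance from $D^c$ — with a scaling bound on $\partial_t^q f_t$ read off from \eqref{eq2.3}. First I would record the pointwise Gaussian bound. Since $x\in D$, every $y\in D^c$ satisfies $|x-y|\geq d(x,D^c)$, so that $|x-y|^2/4\geq c$ and hence
\[
g(s,x,y)=(4\pi s)^{-d/2}\exp\!\left(-\frac{|x-y|^2}{4s}\right)\leq (4\pi s)^{-d/2}e^{-c/s},\qquad s>0,
\]
for \emph{all} $y\in D^c$. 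This is the only place the positivity of $c$ enters, and it is exactly what renders the final estimate uniform in $y$.

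Second, I would bound the time derivative of $f_t$ on the relevant range of $s$. Substituting \eqref{eq2.3} and writing $r=t^{-2/\alpha}s$, the interval $s\in(0,Mt^{2/\alpha}]$ corresponds to $r\in(0,M]$. Each map $r\mapsto r^jf_1^{(j)}(r)$ is continuous on $(0,\infty)$ and, by \eqref{eq2.5}, extends continuously to $r=0$ with value $0$; it is therefore bounded on the compact interval $[0,M]$. Summing the finitely many terms in \eqref{eq2.3} produces a constant $c_1'(q,M)$, independent of $t$, with
\[
\left|\frac{\partial^q}{\partial t^q}f_t(s)\right|\leq c_1'(q,M)\,t^{-2/\alpha-q},\qquad 0<s\leq Mt^{2/\alpha}.
\]
Combining the two displays and absorbing $(4\pi)^{-d/2}$ into the constant yields the asserted inequality, the supremum over $y\in D^c$ being controlled by the same $y$-free right-hand side.

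Finally I would verify that this right-hand side tends to $0$. The mechanism is that $e^{-c/s}$ decays faster than any power of $s$ as $s\to 0^+$: for each integer $N$ there is $C_N$ with $s^{-d/2}e^{-c/s}\leq C_N s^N$ on $(0,1]$, so for $t$ small enough that $Mt^{2/\alpha}\leq 1$,
\[
\int_0^{Mt^{2/\alpha}}s^{-d/2}e^{-c/s}\,ds\leq \frac{C_N}{N+1}\,\bigl(Mt^{2/\alpha}\bigr)^{N+1}.
\]
Multiplying by $t^{-2/\alpha-q}$ leaves a quantity of order $t^{2N/\alpha-q}$, which vanishes as $t\to 0^+$ once $N$ is chosen with $N>q\alpha/2$.

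I expect the only delicate point to be the uniform-in-$s$ derivative bound in the second step, namely confirming that $r^jf_1^{(j)}(r)$ stays bounded as $r\to 0^+$; but this is immediate from \eqref{eq2.5}. Everything else is the routine observation that the genuine exponential decay $e^{-c/s}$ (available because $x$ is interior to $D$) overwhelms the polynomial blow-up $t^{-2/\alpha-q}$.
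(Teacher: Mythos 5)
Your proof is correct and follows essentially the same route as the paper's: the uniform Gaussian bound $g(s,x,y)\leq(4\pi s)^{-d/2}e^{-c/s}$ for $y\in D^c$, combined with the scaling formula \eqref{eq2.3} and the vanishing \eqref{eq2.5} to bound $\bigl|\partial_t^q f_t(s)\bigr|\leq c_1(q,M)\,t^{-2/\alpha-q}$ on $0<s\leq Mt^{2/\alpha}$. The only difference is that you also spell out why the right-hand side tends to $0$ (comparing $s^{-d/2}e^{-c/s}$ with $C_Ns^N$ and choosing $N>q\alpha/2$), a step the paper states without proof; this is a welcome, and correct, addition.
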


\begin{lem}\label{lem3.2}
Let $x\in D$. Then given a positive integer $q$ and $M>1$, there is $\beta(q,M)>0$ such that
\[
\sup_{t\leq 1}\sup_{y\in D^c}\left|t^{-q-2/\alpha}\sum_{j=0}^qa_j(q)\int_{Mt^{2/\alpha}}^\infty s^j\,g(s,x,y)\frac{\partial^j}{\partial s^j}\left(\sum_{k=q+1}^\infty\gamma_k(t^{-2/\alpha}s)^{-k\alpha/2-1}\right)\,ds\right|\leq \beta(q,M),
\]
where $\beta(q,M)$ converges to $0$ as $M\to\infty$.
\end{lem}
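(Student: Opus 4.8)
The goal of Lemma~\ref{lem3.2} is to estimate the tail contribution $s\geq Mt^{2/\alpha}$ of $\partial_t^q f_t(s)$, paired against the Gaussian kernel $g(s,x,y)$, uniformly over $y\in D^c$ and $t\leq 1$, and to show this tail is small when $M$ is large. My plan is to exploit the rescaling structure already recorded in \eqref{eq2.2}--\eqref{eq2.4}. The natural first move is to substitute $s=t^{2/\alpha}u$ so that the inner sum becomes $\sum_{k>q}\gamma_k\,u^{-k\alpha/2-1}$, a function of $u$ alone; the prefactors $t^{-q-2/\alpha}$ and the powers $s^j=t^{2j/\alpha}u^j$ should then combine, via \eqref{eq2.3}, into exactly the scaling that makes $t^{-q-2/\alpha}\sum_j a_j(q)s^j\,\partial_s^j(\cdots)$ equal to $\partial_t^q$ applied to the truncated series evaluated in the scaled variable. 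In other words, the quantity inside the supremum is morally $\int_{M}^\infty g(t^{2/\alpha}u,x,y)\,[\partial_t^q f_t]_{\mathrm{tail}}(t^{2/\alpha}u)\,t^{2/\alpha}\,du$ after the change of variables, and the $t$-dependence should collapse.

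\smallskip
\noindent
Once the problem is in scaled form, the next step is to bound $g(s,x,y)=(4\pi s)^{-d/2}e^{-|x-y|^2/4s}$. Since $x\in D$ and $y\in D^c$, we have $|x-y|\geq d(x,D^c)>0$, but on the tail $s\geq Mt^{2/\alpha}$ this lower bound on $|x-y|$ is not what controls the integral; rather, I expect to simply use $g(s,x,y)\leq (4\pi s)^{-d/2}$, discarding the exponential, because the decay now has to come from the series tail. The truncated tail $\sum_{k=q+1}^\infty\gamma_k u^{-k\alpha/2-1}$ begins at the power $u^{-(q+1)\alpha/2-1}$, and differentiating $j\leq q$ times in $s$ lowers each power by $j$; combined with the factor $s^j$ this keeps the homogeneity fixed, so after scaling the integrand in $u$ behaves like $u^{-d/2}\cdot u^{-(q+1)\alpha/2-1}$ near $u=M$ and decays faster at infinity. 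The key point is that the leading exponent $-(q+1)\alpha/2-1-d/2$ is strictly less than $-1$, so $\int_M^\infty u^{-d/2-(q+1)\alpha/2-1}\,du$ converges and is of order $M^{-(q+1)\alpha/2-d/2}$, which tends to $0$ as $M\to\infty$. This furnishes the bound $\beta(q,M)$ and its decay.

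\smallskip
\noindent
The main obstacle I anticipate is justifying that the interchange of the summation $\sum_{k>q}$ with the differentiation $\partial_s^j$ and with the integral $\int_M^\infty$ is legitimate, and that the series for $f_1$ from \eqref{eq2.4} may be term-by-term differentiated and truncated at $k=q+1$ exactly. The representation \eqref{eq2.4} is an asymptotic/convergent expansion of $f_1(s)$ in inverse powers of $s$, valid (convergently) for $s$ in a suitable range; I must check that on the scaled tail $u\geq M>1$ the series converges absolutely together with its first $q$ derivatives and that the finitely many initial terms $k=1,\dots,q$ are precisely the ones removed by the structure of \eqref{eq2.3}, so that only the genuine tail $k\geq q+1$ survives. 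The coefficients $\gamma_k$ grow like $\Gamma(k\alpha/2+1)/k!$, which for $\alpha<2$ decays superexponentially, so absolute convergence of the differentiated series on $u\geq M$ should hold comfortably and dominated-convergence/Fubini arguments will apply; once that bookkeeping is in place, the uniform-in-$t$ bound over $t\leq 1$ is immediate because, after scaling, $t$ has dropped out of the integral entirely except through the overall factor $t^{2/\alpha}$ absorbed by the change of variables. Assembling these estimates yields a constant $\beta(q,M)=C(q,d,\alpha)\,M^{-(q+1)\alpha/2-d/2}\to 0$, completing the proof.
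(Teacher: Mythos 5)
There is a genuine gap, and it sits exactly at the step you flagged as the natural simplification: discarding the exponential and using $g(s,x,y)\le (4\pi s)^{-d/2}$. That bound destroys the uniformity in $t\le 1$ that the lemma requires. To see it concretely, expand the tail term by term: for $k\ge q+1$, the $k$-th summand of the $j$-th integral equals, up to the constant $\gamma_k(-1)^j\Gamma(k\alpha/2+1+j)/\Gamma(k\alpha/2+1)$,
\begin{equation*}
t^{k-q}\int_{Mt^{2/\alpha}}^\infty s^{-k\alpha/2-1}\,g(s,x,y)\,ds .
\end{equation*}
If you replace $g(s,x,y)$ by $(4\pi s)^{-d/2}$, this integral is of order $(Mt^{2/\alpha})^{-k\alpha/2-d/2}$, so the whole term is of order $t^{-q-d/\alpha}\,M^{-k\alpha/2-d/2}$: the $M$-decay you computed is indeed there, but the quantity blows up like $t^{-q-d/\alpha}$ as $t\to 0^+$, so no finite $\beta(q,M)$ can dominate the supremum over $t\le 1$. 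The same defect is visible in your scaled picture: after $s=t^{2/\alpha}u$ the $t$-dependence does \emph{not} collapse --- a factor $t^{-q}$ survives in front, and the kernel becomes $g(t^{2/\alpha}u,x,y)$, which still contains $t$; bounding it by $(4\pi t^{2/\alpha}u)^{-d/2}$ reintroduces $t^{-d/\alpha}$ and the product $t^{-q-d/\alpha}$ is unbounded on $t\le 1$.

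The paper's proof keeps precisely what you discarded. Since $x\in D$ is fixed and $y\in D^c$, one has $|x-y|\ge d(x,D^c)$, hence $g(s,x,y)\le (4\pi s)^{-d/2}e^{-c/s}$ with $c=d(x,D^c)^2/4>0$ uniformly in $y\in D^c$; the factor $e^{-c/s}$ is what tames the singularity of $s^{-q\alpha/2-1-d/2}$ at $s=0$ and yields an integral finite independently of $t$. The $M$-decay is then harvested elsewhere: on the domain $s\ge Mt^{2/\alpha}$ one writes $(t^{-2/\alpha}s)^{-k\alpha/2-1}=(t^{-2/\alpha}s)^{-(k-q)\alpha/2}\,(t^{-2/\alpha}s)^{-q\alpha/2-1}$, bounds the first factor by $M^{-(k-q)\alpha/2}$, and notes that $t^{-q-2/\alpha}(t^{-2/\alpha}s)^{-q\alpha/2-1}=s^{-q\alpha/2-1}$, which leads to
\begin{equation*}
\beta(q,M)=\Biggl(\sum_{j=0}^q|a_j(q)|\sum_{\ell=0}^\infty|\gamma_{\ell+q+1}|\,
\frac{\Gamma((\ell+q+1)\alpha/2+1+j)}{\Gamma((\ell+q+1)\alpha/2+1)}\,M^{-(\ell+1)\alpha/2}\Biggr)
(4\pi)^{-d/2}\int_0^\infty s^{-q\alpha/2-1-d/2}\,e^{-c/s}\,ds,
\end{equation*}
finite, independent of $t$ and $y$, and tending to $0$ as $M\to\infty$ (decay of order $M^{-\alpha/2}$, rather than your claimed $M^{-(q+1)\alpha/2-d/2}$). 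Your bookkeeping for the series --- term-by-term differentiation and absolute convergence via $|\gamma_k|\le \Gamma(k\alpha/2+1)/(\pi k!)$ --- is fine and matches the paper; the flaw is solely in the treatment of the Gaussian kernel, and it is fatal to the uniform-in-$t$ claim.
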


\begin{lem}\label{lem3.3}
For $x\in D$ and $q$ a nonnegative integer, 
\[
\sup_{t\leq 1}\sup_{y\in D^c}\left|t^{-q-2/\alpha}\sum_{j=0}^qa_j(q)\int_{Mt^{2/\alpha}}^\infty s^j\,g(s,x,y)\frac{\partial^j}{\partial s^j}\left(\sum_{k=1}^q\gamma_k(t^{-2/\alpha}s)^{-k\alpha/2-1}\right)\,ds\right|<\infty
\]
and the inside of the absolute value converges to
\[
(q!\,)\,\gamma_q\int_0^\infty s^{-q\alpha/2-1}\,g(s,x,y)\,ds, 
\]
as $t\to 0^+$.
\end{lem}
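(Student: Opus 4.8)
The plan is to carry out the $s$-differentiation explicitly and reduce the displayed expression to a finite sum of elementary terms, then exhibit the algebraic cancellation that makes all potentially divergent terms vanish. Writing $r=2/\alpha$ and $\mu_k=k\alpha/2+1$, and using $\partial_s^j s^{-\mu_k}=(-1)^j\frac{\Gamma(\mu_k+j)}{\Gamma(\mu_k)}s^{-\mu_k-j}$, the factor $s^j$ collapses each derivative back to a single power, while $(t^{-r}s)^{-\mu_k}=t^{r\mu_k}s^{-\mu_k}$ pulls out a clean power of $t$. Since $r\mu_k=k+r$, the expression in the statement becomes
\[
\sum_{k=1}^q \gamma_k\, t^{k-q}\, C_k \int_{Mt^{2/\alpha}}^\infty g(s,x,y)\,s^{-\mu_k}\,ds,\qquad C_k=\sum_{j=0}^q a_j(q)\,(-1)^j\frac{\Gamma(\mu_k+j)}{\Gamma(\mu_k)}.
\]
The factors $t^{k-q}$ with $k<q$ blow up as $t\to 0^+$, so the entire content of the lemma hinges on showing that $C_k=0$ for $1\le k\le q-1$ and $C_q=q!$.

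To evaluate $C_k$, I would recognize $\sum_{j=0}^q a_j(q)u^j\partial_u^j$ as the operator $L_q$ produced by differentiating the scaling form in $t$: by \eqref{eq2.3}, which is really the operator identity $\partial_t^q\big[t^{-r}h(t^{-r}s)\big]=t^{-r-q}(L_qh)(t^{-r}s)$ for any smooth $h$ with $L_qh=\sum_{j=0}^q a_j(q)u^jh^{(j)}(u)$, one has $C_k u^{-\mu_k}=L_q[u^{-\mu_k}]$. The eigenvalue of $L_q$ on a power can then be read off directly: with $u=t^{-r}s$ we have $t^{-r}u^{-\mu}=t^{r(\mu-1)}s^{-\mu}$, a pure power of $t$, whence
\[
L_q[u^{-\mu}]=t^{r+q}\,\partial_t^q\big(t^{r(\mu-1)}s^{-\mu}\big)\Big|_{u=t^{-r}s}=\Big(\prod_{n=0}^{q-1}\big(r(\mu-1)-n\big)\Big)u^{-\mu}.
\]
Because $\alpha=2/m$ gives $r(\mu_k-1)=\tfrac{2}{\alpha}\cdot\tfrac{k\alpha}{2}=k$, the product is $\prod_{n=0}^{q-1}(k-n)$, which equals $q!$ when $k=q$ and vanishes for each integer $1\le k<q$ (the factor $n=k$ is zero). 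Hence $C_k=0$ for $k<q$ and $C_q=q!$, exactly cancelling the divergent $t^{k-q}$ factors.

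With those terms eliminated, only the $k=q$ summand survives, and it carries the harmless power $t^{q-q}=1$, leaving
\[
q!\,\gamma_q\int_{Mt^{2/\alpha}}^\infty g(s,x,y)\,s^{-q\alpha/2-1}\,ds.
\]
For the uniform bound and the limit I would use that, for $x\in D$ and $y\in D^c$, one has $|x-y|\ge d(x,D^c)$, so $g(s,x,y)\le(4\pi s)^{-d/2}e^{-c/s}$ with $c=d(x,D^c)^2/4>0$; the resulting majorant $\int_0^\infty s^{-d/2-q\alpha/2-1}e^{-c/s}\,ds$ is finite, since the $e^{-c/s}$ factor controls the integrand as $s\to0^+$ and the exponent satisfies $q\alpha/2+1+d/2>1$ at infinity. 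This gives the claimed finiteness of the double supremum, uniformly in $t\le1$ and $y\in D^c$. Finally, since $Mt^{2/\alpha}\to0$ as $t\to0^+$, monotone (or dominated) convergence lets the lower limit drop to $0$, yielding the stated limit $q!\,\gamma_q\int_0^\infty s^{-q\alpha/2-1}g(s,x,y)\,ds$. The one genuinely substantive step is the cancellation $C_k=0$ for $k<q$; everything else is bookkeeping together with a routine Gaussian estimate.
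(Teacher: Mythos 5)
Your proposal is correct and follows essentially the same route as the paper: explicit differentiation collapses the expression to $\sum_{k=1}^q\gamma_k t^{k-q}C_k\int_{Mt^{2/\alpha}}^\infty s^{-k\alpha/2-1}g(s,x,y)\,ds$, the coefficients $C_k$ are identified as falling factorials $k(k-1)\cdots(k-q+1)$ by applying the scaling identity \eqref{eq2.3} (viewed as valid for arbitrary smooth $h$) to the power $h(u)=u^{-k\alpha/2-1}$, and the Gaussian bound $g(s,x,y)\leq(4\pi s)^{-d/2}e^{-c/s}$ with $c=d(x,D^c)^2/4$ gives the uniform bound and the limit. Your ``eigenvalue of $L_q$'' computation is precisely the paper's Lemma \ref{lem4.1}, proved there by the same device of reducing $C_k$ to $\partial_t^q t^k$.
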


We now show how these lemmas imply \eqref{eq3.1}. First, note that by Lemma \ref{lem2.2}, $r_1$ is integrable and vanishes on $D$, so by Lemma \ref{lem2.4} we have for $x\in D$,
\[
\frac{\partial^q}{\partial t^q}\,P_tr_1(x)=\int_{D^c}\left(\int_0^\infty g(s,x,y)\,\frac{\partial^q}{\partial t^q}f_t(s)\,ds\right)\,r_1(y)\,dy.
\]
Next, given a positive integer $q$ and $M>1$,
\begin{equation}\label{eq3.2}
\int_0^\infty g(s,x,y)\,\frac{\partial^q}{\partial t^q}f_t(s)\,ds=\int_0^{Mt^{2/\alpha}} g(s,x,y)\,\frac{\partial^q}{\partial t^q}f_t(s)\,ds
+\int_{Mt^{2/\alpha}}^\infty g(s,x,y)\,\frac{\partial^q}{\partial t^q}f_t(s)\,ds,
\end{equation}
and by \eqref{eq2.3}-\eqref{eq2.4},
\begin{align}\label{eq3.3}
\int_{Mt^{2/\alpha}}^\infty g(s,x,y)\,\frac{\partial^q}{\partial t^q}f_t(s)\,ds
&=\sum_{j=0}^qa_j(q)\,t^{-q-2/\alpha}
\int_{Mt^{2/\alpha}}^\infty g(s,x,y)\,(t^{-2/\alpha}s)^jf_1^{(j)}(t^{-2/\alpha}s)\,ds\notag\\
&=t^{-q-2/\alpha}\sum_{j=0}^qa_j(q)
\int_{Mt^{2/\alpha}}^\infty s^j\,g(s,x,y)\,\frac{\partial^j}{\partial s^j}f_1(t^{-2/\alpha}s)\,ds\notag\\
&=t^{-q-2/\alpha}\sum_{j=0}^qa_j(q)
\int_{Mt^{2/\alpha}}^\infty s^j\,g(s,x,y)\,\frac{\partial^j}{\partial s^j}\left(\sum_{k=1}^\infty\gamma_k(t^{-2/\alpha}s)^{-k\alpha/2-1}\right)\,ds\notag\\
&=t^{-q-2/\alpha}\sum_{j=0}^qa_j(q)
\int_{Mt^{2/\alpha}}^\infty s^j\,g(s,x,y)\,\frac{\partial^j}{\partial s^j}\left(\sum_{k=1}^q+\sum_{k=q+1}^\infty\right)\times\notag\\ &\left(\gamma_k\,(t^{-2/\alpha}s)^{-k\alpha/2-1}\right)\,ds, 
\end{align}
where in the second equality we used the fact that $$\frac{\partial^j}{\partial s^j}f_1(t^{-2/\alpha}s)= t^{(-2/\alpha) j}f_1^{(j)}(t^{-2/\alpha}s).$$
Hence by Lemmas \ref{lem3.1}-\ref{lem3.3}, we see that for fixed $x\in D$, the left hand side of \eqref{eq3.2} is bounded for $t\leq 1$ and $y\in D^c$. Since $r_1$ is integrable on $\mathbb{R}^d$ and vanishes on $D$, we can apply dominated convergence to get for $M>1$,
\begin{align*}
\lim_{t\to 0^+}\frac{\partial^q}{\partial t^q}P_tr_1(x)
&=\lim_{t\to 0^+}\Bigg[\int_{D^c}\left(\int_0^{Mt^{2/\alpha}} g(s,x,y)\,\frac{\partial^q}{\partial t^q}f_t(s)\,ds\right)\,r_1(y)\,dy\\
&\hspace{1in}+\int_{D^c}\left(\int_{Mt^{2/\alpha}}^\infty g(s,x,y)\,\frac{\partial^q}{\partial t^q}f_t(s)\,ds\right)\,r_1(y)\,dy\Bigg]\\
&=\left[ 0+(q!\,)\,\gamma_q\int_{D^c}\left(\int_0^\infty s^{-q\alpha/2-1}\,g(s,x,y)\,ds\right)\,r_1(y)\,dy+O(\beta(q,m))\right],
\end{align*}
by \eqref{eq3.3} and Lemmas \ref{lem3.1}-\ref{lem3.3}. Then let $M\to\infty$ to obtain 
\[
\lim_{t\to 0^+}\frac{\partial^q}{\partial t^q}P_tr_1(x)=(q!)\,\gamma_q\int_{D^c}\left(\int_0^\infty s^{-q\alpha/2-1}\,g(s,x,y)\,ds\right)\,r_1(y)\,dy.
\]

To finish the proof of \eqref{eq3.1}, since $r_1\geq 0$, it remains to show that
\[
(-1)^q\,\gamma_q\leq 0,\quad q=0,\dots,m-1.
\]
To see this, observe that for some $R_q>0$, 
\[
\gamma_q=R_q\,(-1)^{q+1}\,\sin\left(\frac{\pi q\alpha}{2}\right).
\]
Since $0\leq q\leq m-1$ and $\alpha=2/m$,
\[
\frac{\pi q\alpha}{2}\in\left[0,\pi(1-1/m)\right],
\] 
and this implies that
\[
\sin\left(\frac{\pi q\alpha}{2}\right)\geq 0.
\]
It follows that for some $\rho_q\geq 0$, 
\[
\gamma_q=\rho_q\,(-1)^{q+1},
\]
and this yields that $(-1)^q\gamma_q\leq 0$, as desired.\hfill$\square$

\section{Proof of Lemmas \ref{lem3.1}-\ref{lem3.3}}\label{sec4}

\bigskip
\n{\bf Proof of Lemma \ref{lem3.1}}. For $M>1$ and $s\leq Mt^{2/\alpha}$, by \eqref{eq2.3} and \eqref{eq2.5},
\begin{align*}
\left|\frac{\partial^q}{\partial t^q}f_t(s)\right|
&=\left| \sum_{j=0}^qa_j(q)\,t^{-2/\alpha-q}\,(t^{-2/\alpha}s)^jf_1^{(j)}(t^{-2/\alpha}s)\right|\\
&\leq c_1(q,M)\,t^{-2/\alpha-q},
\end{align*}
where $c_1(q,M)>0$ is independent of $t$. Thus for fixed $x\in D$,
\begin{align*}
\sup_{y\in D^c}\left|\int_0^{Mt^{2/\alpha}}g(s,x,y)\,\frac{\partial^q}{\partial t^q}f_t(s)\,ds\right|
&\leq c_2\,\int_0^{Mt^{2/\alpha}}s^{-d/2}\,e^{-c/s}\left|\frac{\partial^q}{\partial t^q}f_t(s)\right|\,ds\\
&\leq c_3\,t^{-2/\alpha-q}\int_0^{Mt^{2/\alpha}} s^{-d/2}
\,e^{-c/s}\,ds\\
&\to 0, 
\end{align*}
as $t\to 0^+$.\hfill$\square$

\bigskip
\n{\bf Proof of Lemma \ref{lem3.2}}. We have for $M>1$,
\begin{align*}
&\left|t^{-q-2/\alpha}\int_{Mt^{2/\alpha}}^\infty s^j\,g(s,x,y)\frac{\partial^j}{\partial s^j}\left(\sum_{k=q+1}^\infty\gamma_k(t^{-2/\alpha}s)^{-k\alpha/2-1}\right)\,ds\right|\\
&\qquad=\left|t^{-q-2/\alpha}\int_{Mt^{2/\alpha}}^\infty s^j\,g(s,x,y)\left(\sum_{k=q+1}^\infty\gamma_k\,(-1)^j\,\frac{\Gamma(k\alpha/2+1+j)}{\Gamma(k\alpha/2+1)}\,t^{-2j/\alpha}\,(t^{-2/\alpha}s)^{-k\alpha/2-1-j}\right)\,ds\right|\\
&\qquad\leq t^{-q-2/\alpha}\sum_{k=q+1}^\infty\left|\gamma_k\right|\,\frac{\Gamma(k\alpha/2+1+j)}{\Gamma(k\alpha/2+1)}\,\int_{Mt^{2/\alpha}}^\infty(t^{-2/\alpha}s)^{-k\alpha/2-1}\,g(s,x,y)\,ds\\
&\qquad=t^{-q-2/\alpha}\sum_{\ell=0}^\infty\left|\gamma_{\ell+q+1}\right|\,\frac{\Gamma((\ell+q+1)\alpha/2+1+j)}{\Gamma((\ell+q+1)\alpha/2+1)}\,\int_{Mt^{2/\alpha}}^\infty(t^{-2/\alpha}s)^{-(\ell+q+1)\alpha/2-1}\,g(s,x,y)\,ds\\
&\qquad=\sum_{\ell=0}^\infty\left|\gamma_{\ell+q+1}\right|\,\frac{\Gamma((\ell+q+1)\alpha/2+1+j)}{\Gamma((\ell+q+1)\alpha/2+1)}\,\int_{Mt^{2/\alpha}}^\infty(t^{-2/\alpha}s)^{-(\ell+1)\alpha/2}\,s^{-q\alpha/2-1}\,g(s,x,y)\,ds\\
&\qquad\leq \sum_{\ell=0}^\infty\left|\gamma_{\ell+q+1}\right|\,\frac{\Gamma((\ell+q+1)\alpha/2+1+j)}{\Gamma((\ell+q+1)\alpha/2+1)}\,M^{-(\ell+1)\alpha/2}\,\int_{Mt^{2/\alpha}}^\infty s^{-q\alpha/2-1}\,g(s,x,y)\,ds.
\end{align*}
Therefore for fixed $x\in D$ and $M>1$, with $c=d(x,D^c)^2/4$,
\begin{align*}
\sup_{y\in D^c}&\left|t^{-q-2/\alpha}\sum_{j=0}^qa_j(q)\int_{Mt^{2/\alpha}}^\infty s^j\,g(s,x,y)\frac{\partial^j}{\partial s^j}\left(\sum_{k=q+1}^\infty\gamma_k(t^{-2/\alpha}s)^{-k\alpha/2-1}\right)\,ds\right|\\
&\qquad\leq \left(\sum_{j=0}^q\left|a_j(q)\right|\sum_{\ell=0}^\infty\left|\gamma_{\ell+q+1}\right|\,\frac{\Gamma((\ell+q+1)\alpha/2+1+j)}{\Gamma((\ell+q+1)\alpha/2+1)}\,M^{-(\ell+1)\alpha/2}\right)\times\\
&(4\pi)^{-d/2}\int_0^\infty s^{-q\alpha/2-1-d/2}\,e^{-c/s}\,ds=\beta{(q,M)},
\end{align*}
where $\beta{(q,M)}\to 0$ as $M\to\infty$ is independent of $t$.\hfill$\square$

In order to prove Lemma \ref{lem3.3}, we need the following result.
\begin{lem}\label{lem4.1}
The following identity holds:
\[
\sum_{j=0}^q a_j(q)\,(-1)^j\,\frac{\Gamma(k\alpha/2+1+j)}{\Gamma(k\alpha/2+1)}
=\left\{\begin{array}{ll}0,&\qquad k<q\\q!,&\qquad k=q.\end{array}\right.
\]
\end{lem}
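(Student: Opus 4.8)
The plan is to exploit the fact that the coefficients $a_j(q)$ appearing in \eqref{eq2.3} are determined entirely by the scaling relation \eqref{eq2.2}, and are therefore independent of the particular density $f_1$. Indeed, \eqref{eq2.3} is obtained by differentiating $t^{-2/\alpha}f_1(t^{-2/\alpha}s)$ repeatedly in $t$ via the product and chain rules; writing $\beta=2/\alpha$ and $u=t^{-\beta}s$ (so that $\partial_t u=-\beta t^{-1}u$), an induction on $q$ yields the recursion $a_j(q+1)=-(\beta+q+\beta j)\,a_j(q)-\beta\,a_{j-1}(q)$ (with $a_{-1}(q)=a_{q+1}(q)=0$), whose right-hand side involves only $\beta$, $q$ and $j$. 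Hence \eqref{eq2.3} holds verbatim, with the \emph{same} constants $a_j(q)$, for any function of the form $t^{-\beta}h(t^{-\beta}s)$. I would make this universality explicit first, as it is the conceptual heart of the argument.

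Granting this, I would apply \eqref{eq2.3} to the single power function $h(u)=u^{-\nu}$ with $\nu=k\alpha/2+1$. On one hand, scaling gives $t^{-\beta}h(t^{-\beta}s)=t^{\beta(\nu-1)}s^{-\nu}$, whose $q$-th $t$-derivative equals $\big[\prod_{i=0}^{q-1}(\beta(\nu-1)-i)\big]\,t^{\beta(\nu-1)-q}s^{-\nu}$. On the other hand, \eqref{eq2.3} together with $h^{(j)}(u)=(-1)^j\frac{\Gamma(\nu+j)}{\Gamma(\nu)}u^{-\nu-j}$ expresses the same derivative as $t^{\beta(\nu-1)-q}s^{-\nu}\sum_{j=0}^q a_j(q)(-1)^j\frac{\Gamma(\nu+j)}{\Gamma(\nu)}$. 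Comparing the two gives the key relation $\sum_{j=0}^q a_j(q)(-1)^j\frac{\Gamma(\nu+j)}{\Gamma(\nu)}=\prod_{i=0}^{q-1}(\beta(\nu-1)-i)$.

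It then remains to substitute $\beta=2/\alpha$ and $\nu-1=k\alpha/2$, so that $\beta(\nu-1)=k$ and the right-hand side becomes the falling factorial $k(k-1)\cdots(k-q+1)$. Since $\Gamma(\nu+j)/\Gamma(\nu)=\Gamma(k\alpha/2+1+j)/\Gamma(k\alpha/2+1)$, the left-hand side is exactly the sum in Lemma \ref{lem4.1}. For an integer $k$ with $0\le k<q$ the factor $k-k=0$ appears in the product, giving $0$; for $k=q$ the product is $q(q-1)\cdots1=q!$. This is precisely the asserted identity.

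As a consistency check (and an alternative route), one may instead combine \eqref{eq2.2} and \eqref{eq2.4} into the expansion $f_t(s)=\sum_{k\ge1}\gamma_k t^k s^{-k\alpha/2-1}$, differentiate $q$ times in $t$ to obtain $\sum_{k\ge q}\gamma_k\frac{k!}{(k-q)!}t^{k-q}s^{-k\alpha/2-1}$, and match this against the expansion of the right-hand side of \eqref{eq2.3}; equating coefficients of the linearly independent functions $s^{-k\alpha/2-1}$ recovers the same identity wherever $\gamma_k\ne0$. I expect the main obstacle to lie precisely in justifying the universality of the $a_j(q)$ and, in this series route, the term-by-term differentiation together with the vanishing of $\gamma_k$ for $k$ a multiple of $m$. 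The monomial argument above avoids both difficulties, which is why I would adopt it as the primary proof.
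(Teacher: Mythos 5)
Your proposal is correct and is essentially the paper's own argument: both exploit that the coefficients $a_j(q)$ depend only on the scaling structure (so \eqref{eq2.3} holds verbatim for any $t^{-2/\alpha}h(t^{-2/\alpha}s)$), apply this to the power function $h(x)=x^{-k\alpha/2-1}$, and reduce the identity to the elementary fact that $\frac{\partial^q}{\partial t^q}t^k$ vanishes for integers $0\le k<q$ and equals $q!$ for $k=q$. The only cosmetic differences are that the paper sets $s=1$ while you carry $s$ along, and that you spell out the universality of the $a_j(q)$ via an explicit recursion where the paper simply asserts it.
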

\begin{proof}
By \eqref{eq2.2}, $f_t(s)=t^{-2/\alpha}\,f_1(t^{-2/\alpha}s)$, hence the formula \eqref{eq2.3} is a special case of the formula
\begin{equation}\label{eq4.1}
\frac{\partial^q}{\partial t^q}\,t^{-2/\alpha}\,h(t^{-2/\alpha}s)=\sum_{j=0}^qa_j(q)\,t^{-2/\alpha-q}(t^{-2/\alpha}s)^jh^{(j)}(t^{-2/\alpha}s).
\end{equation}
Taking $h(x)=x^{-k\alpha/2-1}$ we have
\[
h^{(j)}(x)=(-1)^j\frac{\Gamma(k\alpha/2+1+j)}{\Gamma(k\alpha/2+1)}\,x^{-k\alpha/2-1-j}.
\]
Then with $s=1$, the right hand side of \eqref{eq4.1} becomes
\begin{align*}
\sum_{j=0}^qa_j(q)t^{-2/\alpha-q}&(t^{-2/\alpha})^jh^{(j)}(t^{-2/\alpha})\\
&=\sum_{j=0}^q(-1)^j\,a_j(q)\,t^{-2/\alpha-q}\,(t^{-2/\alpha})^j\,\frac{\Gamma(k\alpha/2+1+j)}{\Gamma(k\alpha/2+1)}\,(t^{-2/\alpha})^{-k\alpha/2-1-j}\\
&=t^{k-q}\sum_{j=0}^q(-1)^j\,a_j(q)\,\frac{\Gamma(k\alpha/2+1+j)}{\Gamma(k\alpha/2+1)}.
\end{align*}
On the other hand,
\[
t^{-2/\alpha}\,h(t^{-2/\alpha})=t^{-2/\alpha}\,(t^{-2/\alpha})^{-k\alpha/2-1}=t^k,
\]
and so \eqref{eq4.1} becomes
\[
\frac{\partial^q}{\partial t^q}\,t^k=t^{k-q}\sum_{j=0}^q(-1)^j\,a_j(q)\,\frac{\Gamma(k\alpha/2+1+j)}{\Gamma(k\alpha/2+1)}.
\]
The desired conclusion follows from this.
\end{proof}

\bigskip
\n{\bf Proof of Lemma \ref{lem3.3}}. We have
\begin{align*}
t^{-q-2/\alpha}\sum_{j=0}^q&a_j(q)\int_{Mt^{2/\alpha}}^\infty s^j\,g(s,x,y)\frac{\partial^j}{\partial s^j}\left(\sum_{k=1}^q\gamma_k(t^{-2/\alpha}s)^{-k\alpha/2-1}\right)\,ds\\
&=t^{-q-2/\alpha}\sum_{j=0}^qa_j(q)\,\sum_{k=1}^q\gamma_k\int_{Mt^{2/\alpha}}^\infty s^j\,g(s,x,y)\,t^{k+2/\alpha}\,(-1)^j\,\frac{\Gamma(k\alpha/2+1+j)}{\Gamma(k\alpha/2+1)}s^{-k\alpha/2-1-j}\,ds\\
&=\sum_{j=0}^qa_j(q)\,(-1)^j\,\left(\int_{Mt^{2/\alpha}}^\infty s^{-k\alpha/2-1}\,g(s,x,y)\,ds\right)
\sum_{k=1}^q\gamma_k\,\frac{\Gamma(k\alpha/2+1+j)}{\Gamma(k\alpha/2+1)}\,t^{k-q}\\
&=\sum_{k=1}^q\gamma_k\,t^{k-q}\,\left(\int_{Mt^{2/\alpha}}^\infty s^{-k\alpha/2-1}\,g(s,x,y)\,ds\right)
\sum_{j=0}^qa_j(q)\,(-1)^j\,\frac{\Gamma(k\alpha/2+1+j)}{\Gamma(k\alpha/2+1)}\\
&=(q!\,)\,\gamma_q\int_{Mt^{2/\alpha}}^\infty s^{-q\alpha/2-1}\,g(s,x,y)\,ds,
\end{align*}
by Lemma \ref{lem4.1}. In particular, for fixed $x\in D$ and $c=d(x,D^c)^2/4$,
\begin{align*}
\sup_{t\leq 1}\sup_{y\in D^c}&\left|t^{-q-2/\alpha}\sum_{j=0}^qa_j(q)\int_{Mt^{2/\alpha}}^\infty s^j\,g(s,x,y)\frac{\partial^j}{\partial s^j}\left(\sum_{k=1}^q\gamma_k(t^{-2/\alpha}s)^{-k\alpha/2-1}\right)\,ds\right|\\
&\leq (q!\,)\,|\gamma_q|(4\pi)^{-d/2}\int_{0}^\infty s^{-q\alpha/2-1-d/2}\,e^{-c/s}\,ds<\infty.
\end{align*}
Moreover, as $t\to 0^+$,
\begin{align*}
t^{-q-2/\alpha}\sum_{j=0}^qa_j(q)&\int_{Mt^{2/\alpha}}^\infty s^j\,g(s,x,y)\frac{\partial^j}{\partial s^j}\left(\sum_{k=1}^q\gamma_k(t^{-2/\alpha}s)^{-k\alpha/2-1}\right)\,ds\\
&\to(q!\,)\,\gamma_q\int_{0}^\infty s^{-q\alpha/2-1}\,g(s,x,y)\,ds,
\end{align*}
as desired.\hfill$\square$

\newpage

\end{document}